\newcommand\Lshape[2]{
\draw[#1] #2 -- ++(-1, 0) -- ++(0, 1) -- ++(2,0) -- ++(0, -2) -- ++(-1, 0) -- cycle
}
\newcommand{\move}[3]{
\draw[->, >=stealth, color = #1, very thick] #2 -- #3
}
\newcommand{\step}[2]{
\draw[->, >=stealth, thick, #1] #2-- ++(1,0)
}
\newcommand{\trigrid}[1]{
    \foreach \k in {1, ..., #1}
    \draw (\k-1,#1-\k) grid (#1,#1-\k+1);
    }
\newcommand{\TriangShape}[2]{
    \draw[#2] (0, #1) -- (#1, #1) -- (#1, 0);
    \foreach \k in {1, ..., #1}
    \draw[#2] (#1 - \k +1, \k -1) -- (#1 - \k, \k -1) -- (#1 - \k, \k);
}
\newcommand{\Z}{\mathbb{Z}}
\newcommand{\N}{\mathbb{N}}
\newtheorem{alg}[algocf]{Algorithm}
\newcommand{\spacc}{\vspace{-0.1cm}}
\begin{document}
\title{Triangle solitaire}

\author{Ville Salo\inst{1}\orcidID{0000-0002-2059-194X} \and
Juliette Schabanel\inst{2}}
\authorrunning{V. Salo \and J. Schabanel}
%
\institute{University of Turku, Turku, Finland \and
École Normale Supérieure PSL, 45 rue d'Ulm, 75005 Paris, France}
\maketitle              
\begin{abstract}
The solitaire of independence is a groupoid action resembling the classical $15$-puzzle, which gives information about independent sets of coordinates in a totally extremally permutive subshift. We study the solitaire with the triangle shape, which corresponds to the spacetime diagrams of bipermutive cellular automata with radius $1/2$. We give a polynomial time algorithm that puts any finite subset of the plane in normal form using solitaire moves, and show that the solitaire orbit of a line of consecutive ones -- the line orbit -- is completely characterised by the notion of a fill matrix. We show that the diameter of the line orbit under solitaire moves is cubic.

\keywords{solitaire of independence \and TEP subshift \and subshift of finite type \and bipermutive cellular automata}
\end{abstract}

\section{Introduction}

Multidimensional symbolic dynamics studies sets of vertex-labellings of the lattices $\Z^d$
under local constraints (forbidden patterns), mainly for $d \geq 2$ (the one-dimensional case
being somewhat different in nature \cite{1995-LM}). The basic object in this theory is the
subshift of finite type or SFT, namely the set of $X \subset A^{\Z^2}$ defined by a finite alphabet $A$
and a finite set of finite forbidden patterns, which may not appear anywhere in the configurations
$x \in X$.

A typical phenomenon in this theory is undecidability, namely most basic questions, like whether a given pattern appears in the subshift or whether it is even nonempty, are undecidable \cite{Wa61,Be66}. Natural questions can be high in the arithmetical hierarchy \cite{JeVa15a} (or even analytic). Similarly, the values of invariants such as entropies \cite{HoMe10} and periods \cite{JeVa15} 
are best described by recursion theory and notions from theoretical computer science. In contrast, in the one-dimensional
case simple-sounding problems tend to be solvable (with some exceptions), and there are often combinatorial and algebraic descriptions for values of invariants.

It is an interesting quest to try to find conditions on SFTs that make them less complicated and
more like their one-dimensional counterparts. 
One such class are the TEP subshifts introduced in \cite{Sa22}. Here, we restrict to TEP subshifts defined by a triangle-shaped rule.
It was shown in \cite{Sa22} that these subshifts have at least two properties that set them apart from
general SFTs, namely they have computable languages and admit a kind of uniform measure.

The idea of a TEP subshift arises from the theory of cellular automata. Indeed a two-dimensional
TEP subshift can be seen as the spacetime subshift of a particular kind of cellular automaton. We concentrate here on the triangle shape $\{(1,1),(0,1),(1,0)\}$, equivalently the spacetime diagrams of radius-$\frac12$ bipermutive cellular automaton (with $(0,-1)$ as the arrow of time and $\{-1, 0\}$ as the neighbourhood), studied for example in \cite{MoBo97,Pi05,Sa07,SaTo13}. For example the spacetime subshift of the XOR cellular automaton with alphabet $\{0,1\}$ and local rule $f(a, b) = a \oplus b$, also known as the Ledrappier \cite{Le78} or three-dot subshift, is TEP with this shape.


Specifically, what we study here are the independent sets of a TEP subshift with the triangle shape $\{(0,0), (0,1), (1,0)\} \subset \Z^2$, namely sets $A \subset \Z^2$ having the property that one can freely choose their content. While it remains open how to characterise such sets, we give here a complete algorithm for putting such a set in normal form in terms of the solitaire process introduced in \cite{Sa22}, which reduces the problem of independence to independence of sufficiently disjoint triangular areas.

Our main interest is in the \emph{line orbit}, namely the solitaire orbit of the line of $n$ horizontally adjacent cells. We show that the line orbit corresponds exactly to the fill matrices studied previously by Gerhard Kirchner in \cite{OEISfillmatrices}. 

Letting $(V, E)$ denote the graph with this orbit as its nodes and $E$ as the solitaire moves, from the connection with fill matrices we immediately obtain a polynomial algorithm for checking whether a subset of the plane belongs to $V$ (Algorithm~\ref{algo:ident orbit}), and that the number of vertices is between $(c_1 n)^n$ and $(c_2 n)^n$ for some constants $c_1, c_2$ (Theorem~\ref{thm:orbit size}). We show how to find connecting paths between elements of $V$ in polynomial time (Algorithm~\ref{alg:path}), and show that the diameter of $(V, E)$ is $\Theta(n^3)$ (Theorem~\ref{thm:diameter}). 

Algorithm~\ref{algo:ident orbit} gives a positive answer to the last two subquestions of Question~5.36 in \cite{Sa22}.

\section{Definitions}
\label{sec:Definitions}

The action we consider in this paper is the solitaire move with the triangle shape $T = \{(0,1); (1, 1); (1, 0)\}$ on elements of $\{0, 1\}^{\Z^2}$. This action consists in arbitrarily permuting the three patterns depicted in Figure \ref{fig:triangle_action} at some coordinates where one of them appears in the pattern.
\begin{figure}[ht!]
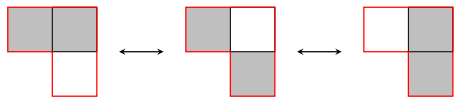

    \centering
    \includestandalone[height = 1.2cm]{Figures/fig_triang_action}
    \caption{The action of the triangle shape. Grey denotes $1$, white denotes $0$.}
    \label{fig:triangle_action}
\end{figure}
\spacc{}

We call this a \emph{solitaire move}, or more specifically a \emph{triangle move}. The \emph{orbit} of a pattern $P$, denoted $\gamma(P)$ is the set of patterns reachable from it using the triangle moves. In what follows, we only consider patterns with finitely many $1$s. 

The solitaire arises from the study of \emph{TEP subshifts} \cite{Sa22}. We omit the general definition, and give this only for the triangle shape $T$: Let $\Sigma$ be a finite alphabet and let $R \subset \Sigma^T$ satisfy the following, for any $\{a, b, c\} = T$: for all $p \in \Sigma^{\{a, b\}}$ there exists a unique $s\in\Sigma$ such that $p \sqcup (c \mapsto s) \in R$, where $\sqcup$ denotes disjoint union of patterns. Then $R$ is called a \emph{TEP family}. The set $X_R \subset \Sigma^{\Z^2}$, consisting of all $x \in \Sigma^{\Z^2}$ such that for all $\vec v \in \Z^2$ the pattern $t \mapsto x_{\vec v + t}$ is in $R$, is called a \emph{triangular TEP subshift}.

Triangular TEP subshifts are precisely the spacetime subshifts (sets of spacetime diagrams) of bipermutive cellular automata, namely if $f : \Sigma^2 \to \Sigma$ satisfies that $a \mapsto f(a, b)$ and $a \mapsto f(b, a)$ are bijective for all $b \in \Sigma$, then it is easy to see that the patterns $(a, b, f(a, b)) \in \Sigma^{((0,1), (1,1), (1,0))}$ form a TEP family (and the converse holds as well).

\section{Characterisation of the orbits}

In this section, we give a normal form for each $\gamma$-orbit and show a simple way to determine the orbit to which a given pattern belongs.

\subsection{Notations and first result}

The \emph{neighbourhood} of a point $x$ is depicted Figure \ref{fig:nbh}, it corresponds to the points which can be involved in a triangle move with $x$. The neighbourhood of a pattern is the union of the neighbourhoods of its points. Two patterns A and B \emph{touch} if $A\cap N(B) \neq \varnothing$ or $B\cap N(A) \neq \varnothing$.
\begin{figure}[ht]
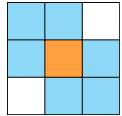

    \centering
    \includestandalone[height = 1.5cm]{Figures/fig_neighbourhood}
    \caption{The neighbours of the orange cell are the blue ones.}
    \label{fig:nbh}
\end{figure}
\spacc{}

Let $T_n = \{(a, b) \in \{0,\ldots,n-1\}^2 \;|\; a + b \geq n-1\}$ denote the size-$n$ triangle. By its \emph{edges} we refer to the intersections of the edges of its convex hull with the lattice $\Z^2$.

\begin{proposition}
\label{prop:lines}
    For every $n$, the three edges of $T_n$ are in the same orbit.
\end{proposition}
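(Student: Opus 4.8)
The plan is to reduce the three-way statement to a single connection using symmetry, and then to realise that connection by an explicit token-routing argument. First I would record that the coordinate swap $\sigma\colon (a,b)\mapsto (b,a)$ is a symmetry of the whole solitaire: it fixes the shape $T$ setwise (it exchanges $(0,1)$ and $(1,0)$ and fixes $(1,1)$), hence sends triangles to triangles and permutes the three single-cell patterns of Figure~\ref{fig:triangle_action} among themselves, so it maps legal moves to legal moves and therefore preserves $\gamma$-orbits. Since $\sigma$ fixes the hypotenuse of $T_n$ setwise and interchanges its top and right edges, it suffices to prove that the hypotenuse $H=\{(k,n-1-k)\mid 0\le k\le n-1\}$ and the top edge $L=\{(k,n-1)\mid 0\le k\le n-1\}$ lie in a common orbit; applying $\sigma$ to such a connection then links $H$ to the right edge as well, putting all three edges in one orbit.

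For the core connection I would first isolate the moves available to a single $1$. Reading off Figure~\ref{fig:triangle_action}, a lone $1$ can be pushed one step horizontally, vertically, or diagonally, provided the two other cells of the triangle used carry $0$; in particular a single $1$ sitting in an otherwise empty region can be routed to any target cell, since in empty space every such unit step is available. This ``free mobility in empty space'' is the tool that lets me move the tokens of $H$ one at a time through the vast empty part of the plane, routing each around the bounded cluster formed by the remaining tokens.

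The construction then transforms $H$ into $L$ by relocating its tokens into their target positions one at a time, processing the columns from right ($k=n-1$) to left. I would bring the token destined for $(k,n-1)$ up through empty space and seat it from directly below or above: filling right-to-left keeps the cell $(k-1,n-1)$ — the one required to be empty for that last placement step — free, while the already-placed tokens to the right do not block the approach, which arrives from the empty region beyond the current frontier.

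The step I expect to be the main obstacle is collision control: guaranteeing that every token can be extracted from the current cluster and steered to its slot without the ``two neighbouring cells empty'' condition ever failing, and in particular handling the shared corner $(0,n-1)$, which tends to block the vertical moves needed to seat the neighbouring tokens (as already happens at $n=3$). I would deal with this by temporarily displacing such blocking tokens into empty space — for instance lifting the corner one step to $(0,n)$ and restoring it at the very end — and by phrasing the whole argument as an induction on $n$, peeling off the shared corner so that $H$ minus that corner becomes a translate of the hypotenuse of $T_{n-1}$; this confines the routing bookkeeping to a single token at each stage and lets the inductive hypothesis do the rest.
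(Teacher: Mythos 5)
There is a fatal misreading of the move at the heart of your argument. The three patterns permuted by a solitaire move (Figure~\ref{fig:triangle_action}) are the three patterns on a translate of $T$ containing exactly \emph{two} $1$s and one $0$; equivalently, a $1$ may be moved to the empty cell of a triangle only when that triangle already contains two $1$s. Your reading --- that a lone $1$ can step to a neighbouring cell ``provided the two other cells of the triangle used carry $0$'' --- is exactly backwards: an isolated $1$ cannot move at all. This is not a minor convention issue; it is forced by the rest of the paper. If single tokens could roam freely, the filling $\varphi(P)$ would not upper bound the orbit (take $P$ a single point: $\varphi(P)=P$, yet your rule would let the point leave), Lemma~\ref{lem:fill_invar} would fail, and the line orbit would consist of \emph{all} $n$-element sets rather than the fill matrices of Theorem~\ref{thm:line_orbit}; the proof of that theorem even states explicitly that a point outside the neighbourhood of the filled triangles ``cannot be moved by a triangle move.'' Consequently your entire ``free mobility in empty space'' routing scheme --- extracting tokens from the cluster, steering them around it, and seating them one at a time --- is unsound from its first step, and the collision-control discussion addresses the wrong difficulty: the real constraint is that every move needs a second token adjacent in a triangle, so the tokens must travel as a cohesive mass.

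Your opening reduction is fine and worth keeping: the swap $\sigma(a,b)=(b,a)$ does preserve translates of $T$ and hence orbits, fixes $T_n$ and its hypotenuse, and exchanges the other two edges, so it suffices to connect the hypotenuse to one straight edge. But the connection itself must be built the way the paper does it (Figure~\ref{fig:lines_transformation}): a chain of moves each of which uses two currently occupied cells of some triangle, peeling tokens off one end of the line and walking them along the already-formed part of the diagonal. For instance for $n=2$ the top edge $\{(0,1),(1,1)\}$ occupies two cells of the single triangle $T_2$, so one move sends $(1,1)$ to $(1,0)$ and yields the hypotenuse; the general case iterates this kind of step. Your proposal would need to be rebuilt around such two-token moves before the induction on $n$ could be carried out.
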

\begin{proof}
    The first line of Figure \ref{fig:lines_transformation} explains by example how to transform the horizontal line into the diagonal, and the second one how to transform the diagonal into the vertical line (of course this is just a rotated inverse of the first transformation).
\begin{figure}[ht]
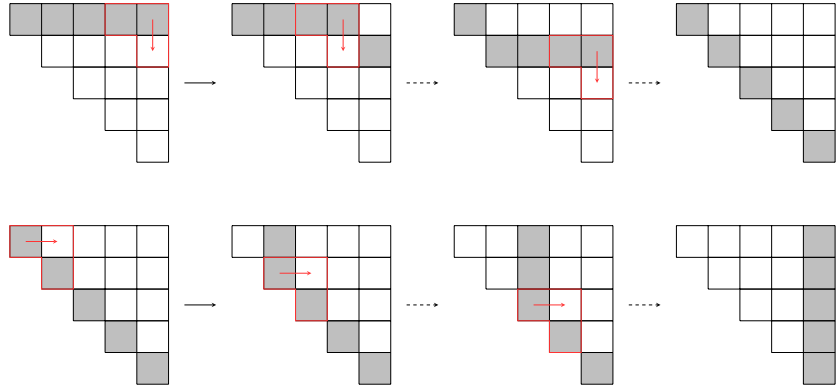

    \centering
    \includestandalone[width=0.9\textwidth]{Figures/fig_lines}
    \caption{How to transform one line into another for $n=5$.}
    \label{fig:lines_transformation}
\end{figure}
\spacc{}
\qed\end{proof}

\subsection{The filling process}

The operation of \emph{filling} a pattern $P$ with the triangle shape was introduced in \cite{OEISfillmatrices} to define the fill matrices. It consists in recursively adding to $P$ the points that complete a triangle with two points of $P$ until no more can be added. More precisely, in a single step we may add $\vec w$ if $P \cap |\vec v + T| = 2$ and $\vec w \in \vec v + T$.

Clearly this transformation is confluent and terminating, and we denote by $\varphi(P)$ its unique the fixed point. Clearly $\varphi(P)$ upper bounds the set of points which can appear in patterns in the solitaire orbit of $P$. An example is shown in the appendix in Figure \ref{fig:fill process}.

We say $P$ \emph{fills} if $\varphi(P) = v + T_n$ for some vector $v$ where $n = |P|$. In \cite{OEISfillmatrices}, filling sets $P$ were called \emph{fill matrices}. We will show that they in fact correspond to the elements of the line orbit.

\begin{lemma}
\label{lem:fill_invar}
    If two patterns $P$ and $Q$ are in the same orbit, then $\varphi(P) = \varphi(Q)$.
\end{lemma}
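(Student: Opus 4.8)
The plan is to reduce the claim to the effect of a \emph{single} triangle move, and then to show that such a move never changes the fill. Since $P$ and $Q$ lie in the same orbit, they are joined by a finite sequence of triangle moves, so by induction on the length of this sequence it suffices to prove $\varphi(P)=\varphi(Q)$ whenever $Q$ is obtained from $P$ by one move. Before doing this I would record two routine properties of $\varphi$ that follow from its being confluent and terminating: first, that $\varphi(A)$ is the smallest superset of $A$ closed under the fill rule, which immediately yields \emph{monotonicity} (if $A\subseteq B$ then $\varphi(A)\subseteq\varphi(B)$, since $\varphi(B)$ is a fill-closed set containing $A$); and second, \emph{idempotence} $\varphi(\varphi(A))=\varphi(A)$, since $\varphi(A)$ is already a fixed point.

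Now I would fix a single move. It alters $P$ only inside one triangle $\tau = \vec v + T = \{a,b,c\}$, exchanging the two occupied corners there for a different pair of corners; write $a$ for the corner common to both configurations and $b,c$ for the corners occupied only in $P$, respectively only in $Q$. Setting $S = P\cap Q$, we then have $P = S\cup\{b\}$ and $Q = S\cup\{c\}$ with $a\in S$ and $b,c\notin S$.

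The heart of the argument is that, once two corners of $\tau$ are present, the fill rule recovers the third, so both $P$ and $Q$ fill up to the same completed triangle. Concretely, since $a,b\in S\cup\{b\}$ are two corners of $\tau$, a single fill step adds $c$, whence $S\cup\{a,b,c\}\subseteq\varphi(P)$; applying $\varphi$ and using idempotence gives $\varphi(S\cup\{a,b,c\})\subseteq\varphi(P)$, while monotonicity gives the reverse inclusion, so $\varphi(P)=\varphi(S\cup\{a,b,c\})$. The symmetric computation with $a,c$ yields $\varphi(Q)=\varphi(S\cup\{a,b,c\})$, and therefore $\varphi(P)=\varphi(Q)$. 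The step I expect to require the most care is the bookkeeping of the single move itself, namely verifying that $P$ and $Q$ really share a corner of $\tau$ and agree outside $\tau$, so that the common part $S$ contains that shared corner; granting this, the fill-theoretic core is short. This is also exactly the point where one sees why a move must occupy two of the three corners rather than one, since a lone corner is not fillable and $\varphi$ would then fail to be preserved.
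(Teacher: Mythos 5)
Your proof is correct and uses essentially the same idea as the paper: a solitaire move's new point is recoverable by a fill step, so the fill is unchanged. The paper argues in one shot (filling along all move positions to get $Q \subseteq \varphi(P)$, hence $\varphi(Q) \subseteq \varphi(P)$ by symmetry), whereas you induct move-by-move with explicit monotonicity and idempotence of $\varphi$; the content is the same, yours just being slightly more detailed.
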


\begin{lemma}
\label{lem:fill_shape}
    For any pattern $P$, there are unique integers $k_1, \ldots k_r$ and vectors $v_1, \ldots v_r$ such that $\varphi(P) = \bigcup_{i = 1}^r v_i + T_{k_i}$, $\sum_{i = 1}^r k_i \leqslant |P|$ and $N(T_{k_i}) \cap T_{k_j} = \varnothing$ for each $i \neq j$.
\end{lemma}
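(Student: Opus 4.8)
The plan is to reduce everything to the geometry of a single fill-closed piece. First I would cut $\varphi(P)$ into its \emph{touch-components}, the classes of the equivalence relation generated by ``$p$ touches $q$''. Distinct components are by construction non-touching, which is exactly the required condition $N(T_{k_i})\cap T_{k_j}=\varnothing$; and each component is itself fill-closed, because a point $\vec w$ produced by a fill from $p,q$ forms a unit triangle with them and hence lies in $N(p)$, so it joins the component of $p$. Thus $\varphi(P)=\bigsqcup_i C_i$ with the $C_i$ finite, fill-closed, touch-connected and pairwise non-touching. Once each $C_i$ is shown to be a translate $v_i+T_{k_i}$, uniqueness (up to reindexing) is immediate: the $C_i$ are intrinsically defined as the touch-components, and a translate of $T_{k_i}$ determines both $k_i$ (e.g.\ from $|C_i|=k_i(k_i+1)/2$) and $v_i$. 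So two tasks remain: \textbf{(A)} a finite, touch-connected, fill-closed set is a translate of some $T_k$, and \textbf{(B)} the size bound.

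For \textbf{(A)} I would use the three \emph{extremal lines} of $C$, namely $\min_{p\in C}(x+y)=m$, $\max_{p\in C}x=X$ and $\max_{p\in C}y=Y$. A short case check on the three fill rules shows that none of these quantities is ever altered by a fill: no rule adds a point below the current minimal anti-diagonal, nor to the right of the maximal column, nor above the maximal row. They therefore act as rigid walls, bounding the triangle $\Delta=\{(x,y): x+y\ge m,\ x\le X,\ y\le Y\}$, and clearly $C\subseteq\Delta$. The content is the reverse inclusion, which I would prove anti-diagonal by anti-diagonal from the bottom. The key engine is the ``upward'' fill rule: two consecutive points of an anti-diagonal produce the point sitting above them on the next anti-diagonal, so a complete segment on the diagonal $x+y=d$ propagates to a complete segment on $x+y=d+1$ whose left endpoint has advanced by one while its right endpoint remains at $x=X$ — precisely the profile of $T_k$ with $k=X+Y-m+1$. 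Hence once the bottom diagonal is full we get $\Delta\subseteq C$ and $C=\Delta=v+T_k$.

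Task \textbf{(B)} I would handle by linear algebra, after reducing it to a per-component statement. Writing $S_i=P\cap C_i$, the component-invariance of fills shows $\varphi(S_i)=C_i$ (every $C_i$-point occurring in a fill derivation from $P$ is produced from earlier $C_i$-points, so the derivation restricts to one from $S_i$); since $P\subseteq\varphi(P)$ we have $|P|=\sum_i|S_i|$, so it suffices to prove that $\varphi(S)=T_k$ forces $|S|\ge k$. For this I would map each cell to a rational function $\rho(x,y)=s^{x+y}(1+s)^{-y}\in\mathbb{F}_2(s)$. A one-line computation gives $\rho(\vec v+(0,1))+\rho(\vec v+(1,1))+\rho(\vec v+(1,0))=0$ for every triangle, so each fill step adds a point whose image is the sum of two present images; therefore $\rho(\varphi(S))\subseteq\operatorname{span}_{\mathbb{F}_2}\rho(S)$ and $\dim\operatorname{span}\rho(S)\le|S|$. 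On the other hand the top row of $T_k$ maps to $s^{k-1}(1+s)^{-(k-1)}\cdot\{1,s,\dots,s^{k-1}\}$, a unit times $k$ distinct monomials, hence $k$ linearly independent vectors, so $k\le\dim\operatorname{span}\rho(T_k)\le\dim\operatorname{span}\rho(S)\le|S|$. Summing over components yields $\sum_i k_i\le|P|$.

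The main obstacle is task \textbf{(A)}, and within it the base case: proving that the lowest anti-diagonal of $C$ is a \emph{complete} segment of length $X+Y-m+1$ (in particular that its ends actually reach $x=X$ and $x=m-Y$) and that no interior gaps survive. This is the point where touch-connectedness and fill-closedness must be combined carefully, using the remaining two fill rules and the rigidity of the extremal lines; once the bottom diagonal is known to be full, the upward propagation makes the rest routine. Task \textbf{(B)}, by contrast, is clean once the representation $\rho$ is found, and it also explains why the bound is tight: equality is attained by the line, whose $k$ points already realise a $k$-dimensional span.
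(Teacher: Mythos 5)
Your overall architecture is genuinely different from the paper's: the paper proves the lemma by induction on $|P|$, adding one point at a time to an already-decomposed filling and handling three cases (the new point is already filled, is isolated, or lies in the neighbourhood of a triangle, with possible cascading merges). You instead decompose $\varphi(P)$ intrinsically into touch-components and try to prove directly that a finite, touch-connected, fill-closed set is a translate of some $T_k$. Your component decomposition is sound, the uniqueness remark is fine, and your task (B) --- the bound $\sum_i k_i \le |P|$ via the representation $\rho(x,y)=s^{x+y}(1+s)^{-y}$ over $\mathbb{F}_2(s)$, which turns every unit triangle into a linear dependency --- is correct and arguably cleaner than the paper's accounting of how merges can only decrease the sum of triangle sizes.

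However, task (A) has a genuine gap, and you have put your finger on it yourself: the claim that the lowest anti-diagonal of a touch-connected, fill-closed set $C$ is a complete segment running all the way from $x=m-Y$ to $x=X$. This cannot be deferred as a ``base case'': once it is granted, the upward fill rule immediately yields the whole triangle, so this claim carries essentially the entire geometric content of the lemma for a single component. Neither of your two tools addresses it. The rigidity of the extremal lines only gives $C\subseteq\Delta$, which is in any case trivial because $\Delta$ is defined from the extremal values of $C$ itself; and the upward propagation only operates once the bottom diagonal is known. What is missing is an argument combining touch-connectedness with the two downward/leftward fill rules to exclude gaps in the bottom diagonal and to force its endpoints to reach the extremal column and row --- and it is not clear this is any easier than the statement you set out to prove. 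As written, the proof is incomplete. The most economical repair is to fall back on an induction in the spirit of the paper: order the points of $P\cap C$, maintain a decomposition of the partial fillings into pairwise non-touching triangles, and show that each new point either changes nothing, creates a fresh $T_1$, or extends a triangle and triggers merges; your $\rho$-argument can then replace the merge-accounting step that gives $\sum_i k_i\le|P|$.
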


\begin{proof}
    We prove this by induction on $|P|$. The case $|P| = 1$ is trivial.
    
    Now assume the result is true for patterns of size at most $n$ and let $P$ be a pattern of size $n +1$. Then if $x \in P$, $P\setminus\{x\}$ satisfies the induction hypothesis so we can write $\varphi(P\setminus\{x\}) = \bigcup_{i= 1}^r v_i + T_{k_i}$. We now have three cases to consider. First, if $x \in \varphi(P\setminus\{x\})$ then $\varphi(P) = \varphi(P\setminus\{x\})$. Then, if $x$ is not in the neighbourhood of $\varphi(P\setminus\{x\})$ then no additional filling can be done with it, therefore $\varphi(P) = \varphi(P\setminus\{x\}) \cup \{x\}$ and, as $\{x\}$ is a triangle, we have the appropriate decomposition.
    
    Finally, assume $x \in N(v_1 + T_{k_1})$, then we can extend $T_{k_1} +v_1$ as in Figure \ref{fig:fill_triangle}. By doing so we may lose the property that the triangles do not touch, but if some do so we can merge them by repeating the extension process. Notice that if two triangles are merged, then the new triangle cannot be larger than the sum of the sizes of the initial triangle so the inequality on the triangles' sizes is still satisfied. (Merges may be triggered recursively, but nevertheless no merge can increase the sum of triangle sizes.)
    \begin{figure}[ht]
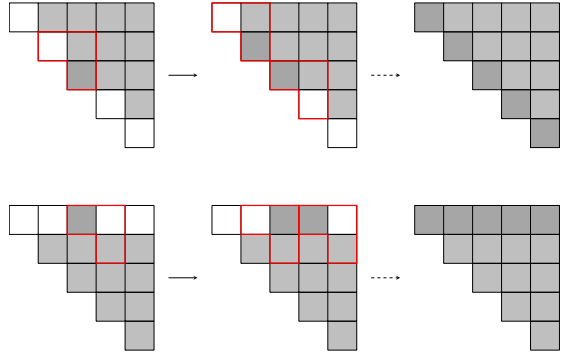

        \centering
        \includestandalone[width = 0.6\textwidth]{Figures/fig_filling_shape}
        \caption{How to extend a triangle with a top neighbour or a subdiagonal neighbour. The right neighbour case is symmetric to the top neighbour case.}
        \label{fig:fill_triangle}
    \end{figure}
    \spacc{}
\qed\end{proof}

We define the \emph{excess} of $P$ as the difference $e(P) = |P| - \sum_{i=1}^r k_i$. Note that $P$ is a fill matrix if and only if it fills and has no excess. We note some basic properties of excess (proofs can be found in the appendix).

\begin{lemma}
\label{lem:monotony excess}
    If $Q$ is a subpattern of $P$ then $e(Q) \leqslant e(P)$.
\end{lemma}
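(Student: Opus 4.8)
The plan is to reduce to the removal of a single point and then control how the total triangle side-length changes. Writing $\varphi(R) = \bigcup_{i=1}^r v_i + T_{k_i}$ for the decomposition provided by Lemma~\ref{lem:fill_shape} and setting $S(R) = \sum_{i=1}^r k_i$, so that $e(R) = |R| - S(R)$, I would start from a subpattern $Q \subseteq P$ and add the points of $P \setminus Q$ back one at a time. This yields a chain $Q = P_0 \subset P_1 \subset \cdots \subset P_m = P$ with $|P_{j+1}| = |P_j| + 1$, and the desired inequality $e(Q) \leqslant e(P)$ follows by transitivity once we establish the single-step bound $e(P_j) \leqslant e(P_{j+1})$. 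So it suffices to prove $e(P') \leqslant e(P' \cup \{x\})$ for a point $x \notin P'$.

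Writing $P = P' \cup \{x\}$, since $|P| = |P'| + 1$ we have $e(P) - e(P') = 1 - \bigl(S(P) - S(P')\bigr)$, so the claim $e(P') \leqslant e(P)$ is equivalent to $S(P) \leqslant S(P') + 1$: \emph{adding one point raises the total side-length by at most one}. I would prove this by revisiting the three cases of the induction in the proof of Lemma~\ref{lem:fill_shape}. If $x \in \varphi(P')$, then $\varphi(P) = \varphi(P')$ and $S(P) = S(P')$; if $x \notin N(\varphi(P'))$, then $\varphi(P) = \varphi(P') \cup \{x\}$ merely adds a singleton triangle $T_1$, so $S(P) = S(P') + 1$. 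In both cases the bound holds.

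The remaining case — and the only real obstacle — is when $x$ lies in the neighbourhood of a single triangle $v_1 + T_{k_1}$. Here the filling first extends this triangle, and the extension of a triangle by a single boundary neighbour (top or subdiagonal) enlarges its side from $k_1$ to $k_1 + 1$, exactly as depicted in Figure~\ref{fig:fill_triangle}. The enlarged triangle may then touch other triangles $v_j + T_{k_j}$ for $j$ in some index set $J$, triggering a cascade of merges. Invoking the estimate already established in the proof of Lemma~\ref{lem:fill_shape} — that a merge produces a triangle whose side is at most the sum of the sides of its constituents, and that no merge increases the total sum of sizes — the single merged triangle has side at most $(k_1 + 1) + \sum_{j \in J} k_j$. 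Hence
\[
S(P) \leqslant \Bigl((k_1 + 1) + \sum_{j \in J} k_j\Bigr) + \sum_{i \notin \{1\} \cup J} k_i = 1 + S(P') = S(P') + 1,
\]
which is precisely the required bound and closes the single-step inequality, and with it the induction. The one point demanding genuine care is confirming that a single neighbour extends a triangle by exactly one unit of side \emph{before} any merging, but this is exactly the content of the extension step of Figure~\ref{fig:fill_triangle}; everything else is bookkeeping on top of Lemma~\ref{lem:fill_shape}.
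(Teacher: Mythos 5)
Your proposal is correct and follows essentially the same route as the paper: both reduce to adding the points of $P \setminus Q$ one at a time, run through the three cases from the proof of Lemma~\ref{lem:fill_shape}, and use the observation that merges can only decrease the total triangle side-length (your bound $S(P) \leqslant S(P') + 1$ is just the paper's statement that excess is non-decreasing, rewritten via $e(R) = |R| - S(R)$). No gaps.
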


\subsection{Characterisation of the orbit through the filling}

It what follow, two patterns \emph{touch} if their fillings touch, otherwise they are \emph{disjoint}.

\begin{theorem}
\label{thm:line_orbit}
    A pattern $P$ has no excess if and only if it is in the orbit of the lines that generate the $T_{k_i}$s.
\end{theorem}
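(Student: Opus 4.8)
The plan is to prove the two implications separately, reading the statement as: the no-excess patterns are exactly the orbit of the pattern $L=\bigcup_{i=1}^r L_i$, where $L_i$ is an edge of $v_i+T_{k_i}$ and the $T_{k_i}$ are the pairwise non-touching triangles furnished by Lemma~\ref{lem:fill_shape}. The reverse implication is short, so I would dispatch it first. A single edge of $k_i$ consecutive cells fills to the whole triangle $v_i+T_{k_i}$ (this is the filling counterpart of Proposition~\ref{prop:lines} and is checked directly), and since the $T_{k_i}$ are pairwise non-touching their fillings cannot interact, so $\varphi(L)=\bigcup_i v_i+T_{k_i}$ with $\sum_i k_i=|L|$. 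If $P\in\gamma(L)$ then $\varphi(P)=\varphi(L)$ by Lemma~\ref{lem:fill_invar}, and by the uniqueness in Lemma~\ref{lem:fill_shape} this is the canonical decomposition of $\varphi(P)$. As triangle moves only permute the contents of a triangle they preserve $|P|$, whence $|P|=|L|=\sum_i k_i$, i.e.\ $e(P)=0$.

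For the forward implication, assume $e(P)=0$ and write $\varphi(P)=\bigcup_{i=1}^r v_i+T_{k_i}$ as in Lemma~\ref{lem:fill_shape}. The first step is to \emph{localise}. Put $P_i=P\cap(v_i+T_{k_i})$. Because the pieces are non-touching, no single triangle can meet two of them (that would require a cell of $T_{k_i}$ neighbouring a cell of $T_{k_j}$, contradicting $N(T_{k_i})\cap T_{k_j}=\varnothing$); hence both filling and triangle moves respect the partition. In particular $\varphi(P)=\bigcup_i\varphi(P_i)$, forcing $\varphi(P_i)=v_i+T_{k_i}$, and $\gamma(P)$ splits as the ``product'' of the $\gamma(P_i)$. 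By Lemma~\ref{lem:monotony excess}, $e(P_i)\le e(P)=0$, and since excess is nonnegative (the inequality of Lemma~\ref{lem:fill_shape}) we get $e(P_i)=0$, i.e.\ $|P_i|=k_i$. Thus it suffices to treat the single-triangle case: if $\varphi(P')=T_k$ and $|P'|=k$, then $P'$ lies in the orbit of an edge of $T_k$. Granting this, each $P_i$ reaches an edge $L_i$ of $v_i+T_{k_i}$, and assembling these across the disjoint pieces puts $P$ in the orbit of $L$; Proposition~\ref{prop:lines} makes the choice of edge immaterial.

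The core is therefore the single-triangle case, which I would prove by induction on $k$, the case $k=1$ being trivial. One tempting engine is a \emph{height potential}, the sum of the second coordinates of the $k$ marks, maximised uniquely by the top edge; unfortunately this stalls at spurious local maxima already for $k=3$ (for instance the fill matrix $\{(1,2),(2,2),(2,0)\}$ of $T_3$ admits no height-increasing move yet is not the top edge), so a monotone argument cannot work. Instead I would \emph{peel a corner}: show that $P'$ can be moved to a configuration in which the corner $(0,k-1)$ is marked and the remaining $k-1$ marks lie in, and form a no-excess filling pattern of, the sub-triangle $(1,0)+T_{k-1}$ (recall that $T_k$ is the disjoint union of its top row and $(1,0)+T_{k-1}$). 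Applying the inductive hypothesis inside $(1,0)+T_{k-1}$ turns these $k-1$ marks into an edge of that sub-triangle, and a final application of the moves of Proposition~\ref{prop:lines} aligns this edge with the marked corner into a full edge of $T_k$.

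The step I expect to be the main obstacle is exactly the peeling: bringing a mark to the corner and certifying that the rest then constitutes a $T_{k-1}$ fill matrix. Because the naive potential can stall, this requires genuinely non-monotone sequences of moves, and the argument must exploit the fill property rather than a counting shortcut: since $P'$ fills all of $T_k$, every mark not yet on the target edge helps to fill some cell via a triangle, and I would use such a triangle to supply the pivot of the swap that walks the mark toward the corner, taking care that the configuration remains a fill matrix throughout (this is automatic, since moves preserve both $\varphi$ by Lemma~\ref{lem:fill_invar} and the mark count). Making this walk explicit --- choosing, say, the lowest then leftmost offending mark and a deterministic route for it --- and verifying that the leftover $k-1$ marks decouple into the sub-triangle is where essentially all the technical work lies; everything preceding it is bookkeeping on top of Lemmas~\ref{lem:fill_invar}--\ref{lem:monotony excess} and Proposition~\ref{prop:lines}.
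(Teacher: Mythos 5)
Your reverse implication and your localisation to individual triangles are both sound (the non-touching condition does force filling and moves to respect the partition, and Lemma~\ref{lem:monotony excess} plus nonnegativity of excess correctly gives $|P_i|=k_i$). But the forward implication has a genuine gap: the single-triangle case, which you yourself identify as ``where essentially all the technical work lies,'' is never actually proved. Your plan is to induct on the triangle size $k$ and ``peel a corner'' --- walk one mark to $(0,k-1)$ and certify that the other $k-1$ marks form a fill matrix of the sub-triangle $(1,0)+T_{k-1}$ --- but you give no argument that such a walk exists, and you correctly observe that the obvious monotone potential stalls (your $T_3$ example is a real local maximum). Nothing in the proposal rules out that some fill matrix of $T_k$ is stuck in a configuration from which no mark can ever be brought to the corner while the rest decouple; asserting that the fill property ``supplies a pivot'' for each step of the walk is exactly the claim that needs proof. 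So as written this is a proof strategy with its central lemma missing, not a proof.

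The paper avoids this difficulty by choosing a different induction variable: it inducts on $|P|$, not on the triangle size. Remove any point $x$ from $P$; by Lemma~\ref{lem:monotony excess} the remainder still has no excess, so the inductive hypothesis puts $P\setminus\{x\}$ into a union of non-touching lines. Since $P$ has no excess, $x$ lies outside the filled triangles, and either it is isolated (a new length-$1$ line) or it sits in the neighbourhood of one of the lines, in which case a short explicit sequence of moves extends that line by one cell (with merges of lines that come to touch handled by the same extension process, the excess count guaranteeing the merged line has exactly the right length). This turns the hard step into a purely local, explicitly checkable operation and never requires navigating inside a fixed triangle. If you want to salvage your corner-peeling route you would need to prove the peeling lemma from scratch; the point-removal induction is the cheaper path.
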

\begin{proof}
    Lemmas \ref{lem:fill_shape} and \ref{lem:fill_invar} guarantee that if $P$ is in the orbit of a set of disjoint lines then it has no excess.
    
    We show the reverse implication by induction on $|P|$.
    
    The base case is trivial since $P$ can only be the line of length $1$.
    
    Now assume the implication holds for all patterns of size $n$ and let $|P|$ be a pattern of size $n+1$ without excess, and let $x \in P$. Then $P \setminus \{x\}$ still has no excess (by Lemma~\ref{lem:monotony excess})
    and has size $n$ so we can apply the induction hypothesis. Let $v_1 + T_{k_1}, \ldots, v_r + T_{k_r}$ be the partition of $\varphi(P\setminus\{x\})$ into triangles.
    
    We have $x \notin \bigcup_{i=1}^r v_i + T_{k_i}$ because otherwise $P$ would have excess. If $x$ is not in the union of the triangles' neighbourhoods then $\varphi(P) = \varphi(P\setminus\{x\}) \cup \{x$\} and $x$ cannot be moved by a triangle move. Else, $x$ can be moved so as to extend the line of the triangle it touches. Figure \ref{fig:merge_line} shows how to extend a line with a top neighbour, the two other cases are similar.
    
    In this process, two triangle might start touching, in this case, by repeating the previous operation we can merge the two triangles' lines into one. Notice that in every merge the length of the new line is exactly the sum of the length of the two initial lines (as otherwise excess is generated).
    \begin{figure}[ht]
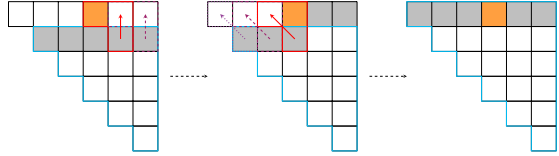

        \centering
        \includestandalone[height = 2cm]{Figures/fig_merge_lines}
        \caption{How to extend a line with a top neighbour.}
        \label{fig:merge_line}
    \end{figure}
    \spacc{}
\qed\end{proof}

We denote by $P_{n, k}$ the shape composed of a line of length $n$ to which $k$ points were added by filling the triangle under the line from right to left and top to bottom. Examples are shown Figure \ref{fig:Pnk}. 
\begin{figure}[ht]
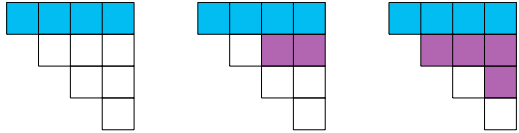

    \centering
    \includestandalone[height = 1.7cm]{Figures/fig_Pnk}
    \caption{From left to right: $P_{4, 0}$, $P_{4, 2}$ and $P_{4, 4}$. The purple cells are the excess.}
    \label{fig:Pnk}
\end{figure}
\spacc{}

\begin{theorem}
\label{thm:excess_orbit}
    If $P$ is a pattern, then $P \in \gamma(P_{n, k})$ if and only if $\varphi(P) = T_n$ and $e(P) = k$.
\end{theorem}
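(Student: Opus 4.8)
The forward direction is immediate from the invariants already at hand. A solitaire move only permutes which of three cells of a translate of $T$ carries the single token, so it preserves the number of ones; thus $|P|$ is a solitaire invariant. By Lemma~\ref{lem:fill_invar} the fill $\varphi(P)$ is invariant as well, and by the uniqueness in Lemma~\ref{lem:fill_shape} the quantity $\sum_i k_i$ is a function of $\varphi(P)$ alone, so the excess $e(P) = |P| - \sum_i k_i$ is a solitaire invariant. Hence if $P \in \gamma(P_{n,k})$ then $\varphi(P) = \varphi(P_{n,k})$ and $e(P) = e(P_{n,k})$, and it only remains to evaluate these on the normal form. The line of length $n$ inside $P_{n,k}$ fills to $T_n$, and the $k$ added points already lie in $T_n$, so $\varphi(P_{n,k}) = T_n$; therefore $\sum_i k_i = n$ and $e(P_{n,k}) = |P_{n,k}| - n = (n+k) - n = k$, giving $\varphi(P) = T_n$ and $e(P) = k$.

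For the converse I would argue by induction, showing that every pattern with $\varphi(P) = T_n$ and $e(P) = k$ can be driven to the normal form $P_{n,k}$ by solitaire moves. The case $k = 0$ is exactly Theorem~\ref{thm:line_orbit}, which puts a no-excess pattern filling to the single triangle $T_n$ into the orbit of the line of length $n$, i.e.\ $P_{n,0}$. For $k \geq 1$ I would single out a \emph{redundant} point $x \in P$, meaning one with $\varphi(P \setminus \{x\}) = \varphi(P) = T_n$; such a point exists precisely because the excess is positive, since a pattern all of whose points are essential generates $T_n$ minimally and hence has no excess. Then $P \setminus \{x\}$ satisfies $\varphi(P \setminus \{x\}) = T_n$ and $e(P \setminus \{x\}) = k-1$ (by the invariance of the fill together with Lemma~\ref{lem:monotony excess}), so by induction $P \setminus \{x\} \in \gamma(P_{n,k-1})$. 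Re-inserting $x$ should then place $P$ in $\gamma(P_{n,k})$: the re-insertion mirrors the case analysis of Lemma~\ref{lem:fill_shape} and Theorem~\ref{thm:line_orbit}, where $x$ either extends the current line or triggers a merge, after which the line-sliding moves of Proposition~\ref{prop:lines} and Theorem~\ref{thm:line_orbit} reassemble everything into a single line of length $n$ carrying $k$ excess tokens, which I finally normalise to $P_{n,k}$.

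The main obstacle is the interference between the distinguished token $x$ and the moves used to canonicalise $P \setminus \{x\}$: the induction hypothesis normalises $P \setminus \{x\}$ as a \emph{standalone} pattern, but in $P$ those same moves may need to route through the cell occupied by $x$, so one cannot simply freeze $x$ and apply the hypothesis as a black box. To overcome this I would choose $x$ to be an extremal token (for instance the one realising a corner of $T_n$) and first slide it to a corner cell of $T_n$ where it obstructs no further move, run the induction inside the remaining region, and only then route $x$ into its canonical excess slot; verifying that this routing is always available is the step that must be checked case by case. The second delicate point is the excess bookkeeping during the triangle merges — ensuring, exactly as in Theorem~\ref{thm:line_orbit}, that each merge adds line lengths precisely and neither creates nor destroys excess — so that the token count stays at $n+k$ throughout and the final pattern is genuinely $P_{n,k}$.
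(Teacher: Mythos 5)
Your forward direction is correct and matches the paper's (one-line) argument: $|P|$, $\varphi(P)$, and hence $e(P)$ are solitaire invariants, and they evaluate to $T_n$ and $k$ on $P_{n,k}$.

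The converse, however, has a genuine gap at the very first step of your induction on $k$. You assert that when $e(P)>0$ there exists a \emph{redundant} point $x\in P$ with $\varphi(P\setminus\{x\})=\varphi(P)$, ``since a pattern all of whose points are essential generates $T_n$ minimally and hence has no excess.'' This implication is false, and the paper itself is at pains to refute it: Section~\ref{sec:excess sets} opens with ``It is tempting to think that if a set has excess, then we can remove some of its points to remove the excess. This turns out not to be true,'' and Figure~\ref{fig:no excess point} exhibits a pattern with $e(P)=1$ whose only excess set is $\varnothing$ --- every point is essential to the fill, yet the excess is positive. (Excess measures the deficit $|P|-\sum_i k_i$ against the triangle decomposition of $\varphi(P)$, not the number of points that can be deleted without shrinking the fill; the two can differ because two essential subpatterns may jointly fill a triangle smaller than the sum of their individual fills.) Without a redundant point your induction on $k$ cannot get started, so the argument does not go through as written. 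Your second flagged difficulty --- that the moves normalising $P\setminus\{x\}$ may be obstructed by $x$ --- is real as well, and ``first slide $x$ to a corner'' is itself a nontrivial claim, since moving $x$ requires two occupied neighbours at each step.

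For comparison, the paper's proof of the converse avoids the induction on $k$ entirely and works constructively in the opposite order: first it forms a single line of length $n$ with the $k$ leftover points lying beneath it, by running the merging process of Theorem~\ref{thm:line_orbit} and resolving obstructions by the relabelling trick (if $x$ cannot be moved onto an occupied cell $y$, simply continue the procedure using the point at $y$ in place of $x$); then it \emph{fetches} each excess point by sweeping the line into the diagonal and back (Figure~\ref{fig:fetch excess}), dragging the excess point into position under the line; finally it normalises the collected excess into the canonical bottom-row arrangement of $P_{n,k}$ by the push-left/push-right moves of Figure~\ref{fig:push excess}. If you want to salvage an inductive scheme, you would need to induct on something other than the existence of a removable point, or first prove (which the paper does not, and which its counterexample suggests is false) that single-triangle fills with positive excess always admit one.
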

\begin{proof}
    Triangle moves preserve fillings and excess so the direct implication holds.
    
    First notice that any excess forming lines under an already existing full top line can be normalized to the corresponding $P_{n, k}$ using the transformations shown in Figure \ref{fig:push excess}.
    \begin{figure}[ht]
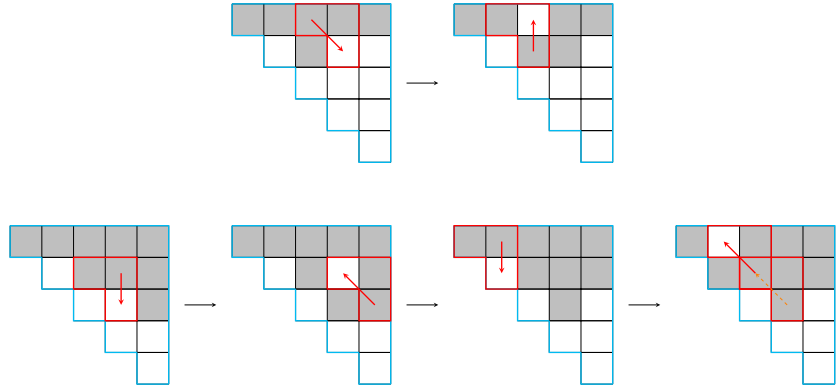

        \centering
        \includestandalone[width=0.9\textwidth]{Figures/fig_push_excess}
        \caption{Pushing the excess to the right (first line) or to the left (second line).}
        \label{fig:push excess}
    \end{figure}
    \spacc{}
    
    Then, if the pattern is composed of a line and one additional point, the additional point can be fetched by transforming the line into the diagonal, stopping the process when one point is above the additional point, then getting back to the line, while dragging the additional point.
    
    All that remains to do is to push the excess to the right. Figure \ref{fig:fetch excess} provides an example. Notice that the presence of some excess already in a good position will not cause any problem for this method, as illustrated Figure~\ref{fig:fetch more excess}.
    \begin{figure}[ht]
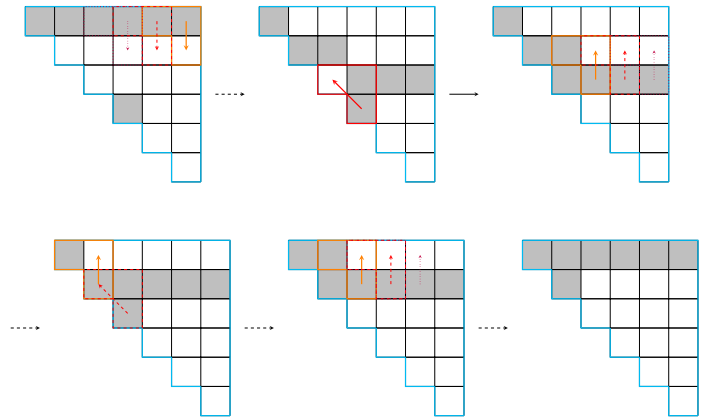

        \centering
        \includestandalone[width=0.75\textwidth]{Figures/fig_fetch_excess}
        \caption{Fetching an excess point.}
        \label{fig:fetch excess}
    \end{figure}
    \spacc{}
    \begin{figure}[ht]
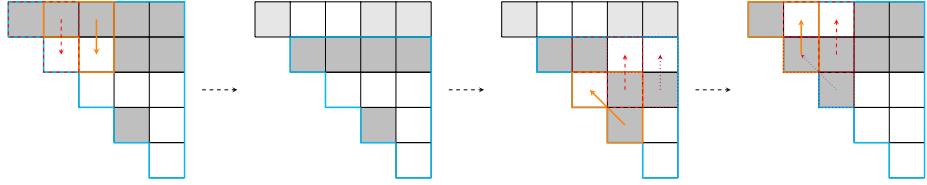

        \centering
        \includestandalone[width=\textwidth]{Figures/fig_fetch_excess2}
        \caption{Fetching an excess point with some excess already lined up.}
        \label{fig:fetch more excess}
    \end{figure}
    
    Thus, any pattern composed of a line of length $n$ and $k$ excess points under it is in the orbit of $P_{n,k}$.
    
    All that remains to prove is that a line can always be formed. This can be done using the merging process described in the proof of Theorem \ref{thm:line_orbit}. The excess point will not disturb the merging because if one move is made impossible by their presence, say point $x$ cannot be moved to position $y$, then we can just use $y$ instead of $x$ for the following steps. \qed
\end{proof}

Combining Theorem \ref{thm:line_orbit} and Theorem \ref{thm:excess_orbit}, we obtain the following classification of the orbits.

\begin{theorem}[Characterisation of the orbits]
\label{thm:orbits_charac}
    If $P$ is a finite pattern then there are integers $n_1, \ldots n_r$ and $k_1, \ldots k_r$ and vectors $v_1, \ldots v_r$ such that $P \in \gamma(\bigcup_{i = 1}^r v_i + P_{n_i, k_i})$, the $P_{n_i, k_i} + v_i$ do not touch each other, $\sum_{i=1}^r n_i = |P|-e(P)$ and $\sum_{i=1}^r k_i =e(P)$.
\end{theorem}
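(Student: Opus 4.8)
The plan is to assemble this final classification directly from the two structural theorems already proved, using the filling decomposition of Lemma~\ref{lem:fill_shape} as the scaffolding. First I would apply Lemma~\ref{lem:fill_shape} to $P$ to obtain the unique decomposition $\varphi(P) = \bigcup_{i=1}^r v_i + T_{n_i}$ into pairwise non-touching triangles with $\sum_i n_i \leqslant |P|$. The idea is that each triangle $v_i + T_{n_i}$ is the filling of a separate ``component'' of $P$, and the whole problem factors over these components because the triangles do not touch and hence no triangle move can ever interact across two of them. So the bulk of the work is to show that the solitaire action genuinely respects this partition, and then to apply Theorem~\ref{thm:excess_orbit} componentwise.

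Concretely, I would first argue that $P$ splits as a disjoint union $P = \bigsqcup_{i=1}^r P_i$ where $P_i = P \cap (v_i + T_{n_i})$, and that filling respects this split, i.e. $\varphi(P_i) = v_i + T_{n_i}$ for each $i$. This uses the confluence/locality of the filling process together with the non-touching condition: a filling step adding a point to complete a triangle can only use two points lying in a common $v_i + T_{n_i}$, since points from distinct triangles are too far apart to lie in a common translate of $T$ (their neighbourhoods are disjoint by Lemma~\ref{lem:fill_shape}). Hence each $P_i$ already fills to $v_i + T_{n_i}$, so $P_i$ fills in the sense of the definition before Lemma~\ref{lem:fill_invar}. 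Now set $k_i = e(P_i) = |P_i| - n_i$. By Theorem~\ref{thm:excess_orbit} applied to $P_i$ (after translating by $-v_i$ so that $\varphi(P_i - v_i) = T_{n_i}$), we get $P_i \in \gamma(v_i + P_{n_i, k_i})$.

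The remaining step is to lift these componentwise orbit memberships to a single orbit statement for $P$. Since the triangles $v_i + T_{n_i}$ are pairwise non-touching and the solitaire orbit of $P_i$ stays inside $\varphi(P_i) = v_i + T_{n_i}$ (as $\varphi$ upper bounds all patterns in the orbit, noted before Lemma~\ref{lem:fill_invar}), the moves applied to different components never interfere. Therefore one can perform the transformations of Theorem~\ref{thm:excess_orbit} independently in each triangle, and concatenating these move sequences yields $P \in \gamma\bigl(\bigcup_{i=1}^r v_i + P_{n_i, k_i}\bigr)$. The non-touching of the $P_{n_i,k_i} + v_i$ follows because each sits inside its triangle $v_i + T_{n_i}$, and those triangles are non-touching. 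Finally, the two counting identities are just bookkeeping: $\sum_i n_i = \sum_i (|P_i| - k_i) = |P| - \sum_i k_i$, and since excess is additive over the non-touching components one has $\sum_i k_i = \sum_i e(P_i) = e(P)$, which simultaneously gives $\sum_i n_i = |P| - e(P)$.

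The main obstacle I anticipate is making rigorous the claim that the solitaire action factors cleanly over the non-touching triangles, i.e. that every move sequence witnessing the componentwise memberships can be executed within the global pattern $P$ without creating interference between components. This requires the observation that a triangle move involves a triangle $\vec v + T$ whose three cells all lie within the neighbourhood of each other, so a single move cannot straddle two triangles whose neighbourhoods are disjoint; combined with the invariance of the per-component filling region, the move sequences are truly independent and can be interleaved or concatenated at will. The identity $\sum_i e(P_i) = e(P)$ also deserves a line of justification, following from the uniqueness of the decomposition in Lemma~\ref{lem:fill_shape} applied to $P$ itself, which shows the global excess is exactly the sum of the local ones.
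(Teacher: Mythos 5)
Your proposal is correct and follows essentially the same route as the paper, which states this theorem as an immediate consequence of combining Lemma~\ref{lem:fill_shape} with Theorems~\ref{thm:line_orbit} and~\ref{thm:excess_orbit}; you have simply made explicit the componentwise application of Theorem~\ref{thm:excess_orbit} and the non-interference of moves across non-touching triangles, details the paper leaves implicit. The locality and additivity arguments you flag as needing care are exactly the right ones and go through as you describe.
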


Note that we now know the orbits for solitaires process with all shapes of size 3. Indeed, such a shape is either 3 aligned points (in which case orbits are easy to analyse), 
or it is a triangle shape on a finite index subgroup of $\mathbb{Z}^2$, and the orbits in different cosets of this subgroup are completely independent and are individually described by the triangle solitaire.

\subsection{Excess sets}
\label{sec:excess sets}

It is tempting to think that if a set has excess, then we can remove some of its points to remove the excess. This turns out not to be true.

If $P$ is a pattern, the \emph{excess sets} of $P$ are the subsets $Q \subset P$ such that $\varphi(P\setminus Q) = \varphi(P)$. We denote by $E(P)$ the set of all such sets.

\begin{lemma}
    If $U \in E(P)$ then $|U| \leqslant e(P)$.
\end{lemma}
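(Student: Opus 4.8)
The plan is to apply Lemma~\ref{lem:fill_shape} to the smaller pattern $Q = P \setminus U$ and exploit the uniqueness of its triangle decomposition. By the very definition of an excess set we have $\varphi(Q) = \varphi(P)$, so the two patterns have the same filling as sets of points. This equality of fillings is the only property of $U$ we will need.

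First I would invoke Lemma~\ref{lem:fill_shape} for $Q$: it yields integers $k_1', \ldots, k_{r'}'$ and vectors $v_1', \ldots, v_{r'}'$ with $\varphi(Q) = \bigcup_i v_i' + T_{k_i'}$, pairwise non-touching, and crucially $\sum_i k_i' \leqslant |Q|$. The same lemma applied to $P$ produces the decomposition $\varphi(P) = \bigcup_i v_i + T_{k_i}$ that defines $e(P)$. Since the non-touching decomposition into triangles is a property of the filling as a set, and $\varphi(Q) = \varphi(P)$, the uniqueness clause of Lemma~\ref{lem:fill_shape} forces the two decompositions to coincide: $r' = r$ and the multisets $\{(v_i', k_i')\}$ and $\{(v_i, k_i)\}$ are equal. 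In particular $\sum_i k_i' = \sum_i k_i$.

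Then I would simply chain the inequalities. The size bound for $Q$ gives $\sum_i k_i = \sum_i k_i' \leqslant |Q| = |P| - |U|$. On the other hand the definition of excess reads $e(P) = |P| - \sum_i k_i$, i.e.\ $\sum_i k_i = |P| - e(P)$. Substituting, $|P| - e(P) \leqslant |P| - |U|$, and cancelling $|P|$ yields $|U| \leqslant e(P)$, as desired.

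The computation is immediate, so there is no real obstacle; the single point that must be handled with care is the uniqueness clause of Lemma~\ref{lem:fill_shape}. Without it one could only assert $\sum_i k_i' \leqslant |Q|$ for \emph{some} decomposition of $\varphi(Q)$ a priori unrelated to the decomposition of $\varphi(P)$, and the bound would not follow. Uniqueness is exactly what forces the two sums of triangle sizes to agree, reducing the statement to the size inequality already guaranteed by the lemma.
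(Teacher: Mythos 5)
Your proof is correct and follows exactly the route the paper intends: the paper's own proof is the one-line remark that the lemma is a direct consequence of Lemma~\ref{lem:fill_shape}, and your argument (apply that lemma to $P\setminus U$, use uniqueness of the non-touching triangle decomposition of the common filling to equate the sums of triangle sizes, then compare with $|P\setminus U|$) is precisely the expansion of that remark.
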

\begin{proof}
    This is a direct consequence of Lemma \ref{lem:fill_shape}.
\end{proof}

However, in general there is no set $U \in E(P)$ such that $|U| = e(P)$. The pattern in Figure~\ref{fig:no excess point} has excess 1 but its only excess set is the empty set.

\begin{figure}[ht]
    \centering
    \includestandalone[height=2cm]{Figures/excess_no_excess_point}
    \caption{Here, $e(p) = 1$ but $E(P) = \{ \varnothing \}$.}
    \label{fig:no excess point}
\end{figure}

If $U \in E(P)$, then every $V \subset U$ is also an excess set. Maximal excess sets do not all have the same cardinality as shown by example in Figure~\ref{fig:max excess set}.

\begin{figure}[ht]
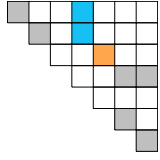

    \centering
    \includestandalone[height=2cm]{Figures/max_excess_set}
    \caption{The blue and orange sets are both maximal excess sets but do not have the same cardinality.}
    \label{fig:max excess set}
\end{figure}

\section{Algorithmic aspects}

\subsection{Complexity of the identification of the orbit of a pattern}

The proof of the characterisation of the orbits provides a polynomial time algorithm to identify to which orbit a given pattern belongs. 

The algorithm is the following:
\begin{alg}[Identify orbit]
\label{algo:ident orbit}
Data: pattern $P$. Result: the canonical representative of the orbit of $P$.
\begin{enumerate}
    \item Fill the pattern.
    \item Divide the filling into triangles $v_1 + T_{k_1}, \ldots, v_r + T_{k_r}$.
    \item Count the excess in each triangle, the canonical representative of the orbit of the pattern is $\bigcup_{i = 1}^r v_i + P_{k_i, e(P \cap (v_i+T_{k_i}))}$. 
\end{enumerate}
\end{alg}


Let $n = |P|$. The first two steps are linear in the number of points in $\varphi(P)$ and $|\varphi(P)| \leqslant \frac{n(n+1)}2$ so they run in time $O(n^2)$. Step 3 is then linear so the total time complexity of the algorithm is $O(n^2)$.

\subsection{Number of steps needed to put a pattern in normal form}

The proof of the characterisation of the orbit also provides an algorithm for finding a sequence of transformations turning the pattern into the canonical representative of its orbit, following the proof of Theorem~\ref{thm:orbits_charac}:
\begin{alg}
\label{alg:path}
\begin{enumerate}
    \item Merge the different components and form lines using the process described in Theorem \ref{thm:line_orbit} (accounting for excess using the modifications described in the proof of Theorem~\ref{thm:excess_orbit}).
    \item Fetch the excess with the process described in Theorem~\ref{thm:excess_orbit}.
\end{enumerate}
\end{alg}

Naively implemented, this algorithm takes $O(n^2 (n + k))$, where $k$ denotes the excess: The first step takes $O(n^3)$, since each merging takes $O(n^2)$ time and we merge at most $n$ times. The second step takes $O(n^2 k)$ if we fetch the $k$ many excess points one by one, as each fetch takes $O(n^2)$.

For $k = 0$, this is in fact optimal.

\begin{theorem}
\label{thm:diameter}
    The diameter of the orbit of the line of length $n$, seen as a graph, is $\Theta(n^3)$.
\end{theorem}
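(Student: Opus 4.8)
The plan is to prove the two matching bounds separately, and to reduce the whole statement to understanding distances to the canonical line. Every triangle move is reversible, since permuting the three patterns is a groupoid action and each permutation has an inverse. Hence the graph distance is symmetric and satisfies $d(A,B)\le d(A,L)+d(L,B)$, where $L=P_{n,0}$ is the horizontal edge; the diameter is therefore within a factor $2$ of $\max_P d(P,L)$. The upper bound $O(n^3)$ is then exactly the running time already established for Algorithm~\ref{alg:path} in the case $k=0$: from any element of the line orbit only the merging step of Theorem~\ref{thm:line_orbit} is needed, and it costs $O(n^2)$ per merge with at most $n$ merges. It thus remains to exhibit a single fill matrix $P^{*}$ with $d(P^{*},L)=\Omega(n^3)$.

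For the lower bound I would attach to the orbit a monovariant and an explicit hard configuration. Note first that a single move displaces one point by one of the edge vectors $\pm(1,0),\pm(0,1),\pm(1,-1)$ of the unit triangle, so for any additive potential $\Phi(P)=\sum_{p\in P}\psi(p)$ a move changes $\Phi$ by one discrete difference of $\psi$. Taking $P^{*}$ to be the vertical or diagonal edge and $L$ the horizontal edge, such a $\psi$ (e.g. a linear functional) already certifies $\Omega(n^2)$ moves, which matches the cost of the single edge rotation of Proposition~\ref{prop:lines}. The goal is to upgrade this by one factor of $n$.

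The main obstacle, and the real content of the theorem, is that \emph{no} additive potential can beat $\Omega(n^2)$: on a grid of diameter $O(n)$ the oscillation of $\psi$ is at most $n$ times its largest one-step difference, and summing over $n$ points caps the attainable ratio of total gap to per-move change at $O(n^2)$. The extra factor of $n$ must therefore come from the legality constraint—a move is allowed only when the other two cells of the unit triangle are empty—i.e. from the impossibility of parallelising rearrangements. I would build $P^{*}$ as a \emph{staircase} fill matrix that, under the merging procedure of Theorem~\ref{thm:line_orbit}, splits into $\Theta(n)$ long sub-lines placed in pairwise conflicting positions, so that they must be straightened one after another, each straightening being an edge-rotation-type sweep that provably costs $\Omega(n^2)$ moves. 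To make this a genuine lower bound rather than an analysis of one particular strategy, I would introduce a discrepancy measure $D$ (an inversion count of the points against their targets on $L$) with $D(P^{*})=\Omega(n^2)$ and $D(L)=0$, and prove an amortised rate bound: although a move can change $D$ by only $O(1)$, each unit of genuine progress must be preceded by $\Omega(n)$ ``setup'' moves that reposition blocking points without decreasing $D$, because threading a point to its target forces a sweep across $\Omega(n)$ cells along which $D$ cannot drop.

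The delicate step—and where I expect all the difficulty to lie—is precisely this throttling argument: showing that in $P^{*}$ the sparsity is compensated by rigidity, so that the conflicting sub-lines cannot be resolved concurrently and no clever shortcut removes the $\Omega(n)$ overhead per unit of $D$. Against a budget of $\Omega(n^2)$ units this yields $d(P^{*},L)=\Omega(n^3)$, and combined with the upper bound and the reversibility reduction above it gives diameter $\Theta(n^3)$. A secondary technical point is to verify that $P^{*}$ is indeed in the line orbit, i.e. that it is a single fill matrix with $\varphi(P^{*})=T_n$ and no excess, which can be checked directly against Lemma~\ref{lem:fill_shape} and Theorem~\ref{thm:line_orbit}.
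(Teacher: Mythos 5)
Your overall architecture is right: the moves are invertible, so the diameter reduces to $2\max_P d(P,L)$; the upper bound $O(n^3)$ is exactly the cost of the merging phase of Algorithm~\ref{alg:path} with $k=0$; and your observation that no additive potential $\Phi(P)=\sum_{p\in P}\psi(p)$ can certify more than $\Omega(n^2)$ is a correct and worthwhile sanity check that rules out the naive approach. But the lower bound --- which is the entire content of the theorem --- is not proved. You name a ``staircase'' configuration $P^{*}$ without specifying it, posit a discrepancy measure $D$ without defining it, and then state the key claim (each unit of decrease in $D$ must be preceded by $\Omega(n)$ setup moves) as the step ``where I expect all the difficulty to lie.'' That claim is precisely the theorem; an amortised rate bound of this kind does not follow from the legality constraint in any generic way, and without an explicit $D$ and an explicit argument that no reordering of moves circumvents the overhead, the proposal is a plan rather than a proof.

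For comparison, the paper's lower bound works quite differently and avoids any global potential or amortised accounting. It builds $P_{n+1}$ from $P_n$ by appending three points at the corners of a triangle enclosing the region where $P_n$'s orbit is confined, and shows directly that incorporating these three points costs $\Omega(n^2)$ extra moves, so the total is $\Omega(n^3)$ by telescoping over the levels of the construction. The $\Omega(n^2)$-per-level bound comes from a local recruitment argument: to push a point one column to the right you need a helper point already in the target column, so marking the travelling point and everything that ever shares a column with a marked point shows that by the time it crosses half the triangle, $\Omega(n)$ points have been dragged into its half. Running this for all three appended points forces some pair of the three regions to share $\Omega(n)$ points, each of which must be transported distance $\Omega(n)$, giving $\Omega(n^2)$ moves at that level with no need to control how an adversarial strategy interleaves its moves. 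If you want to salvage your approach, the recruitment/marking idea is the ingredient your throttling step is missing.
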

\begin{proof}
    We are going to build an infinite family of patterns that require $\Omega(n^3)$ steps to get back to the line.
    
    Let $P_0$ be the empty pattern, and $P_{n+1}$ is inductively built by extending $P_n$ as described in Figure \ref{fig:far pattern} where the grey triangle is the triangle in which  $P_n$'s orbit is confined.
    \begin{figure}[ht]
        \centering
        \includegraphics[height = 2.5cm]{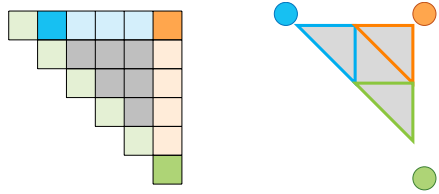}
        \caption{Left: The extension of $P_n$ into $P_{n+1}$. Right: A schematic representation of $P_{n+1}$ used in the proof.}
        \label{fig:far pattern}
    \end{figure}
    \spacc{}
    In pattern $P_{n+1}$, $\Omega(n^2)$ steps are required to fetch the three coloured points. 
    
    Indeed, first notice that up to renaming points, the blue point has to move right to fetch the orange one, then the orange one will have to move down to fetch the green one and finally the green one will have to go up to prepare for the next extension. Now let us analyse the movement of the blue point, starting from the first moment it touches another triangle. To move a point right, one need a point in the column at its right. We'll prove that this means that at some point in the process, half of the points of the pattern have to be in the left half of the triangle.
    
    Mark the blue point, and whenever an unmarked point is in the same column as a marked one, mark it. Consider column $i$ from the left, and the first marked point $x$ to reach it. Clearly the point $x$ moves from column $i-1$ to column $i$, and this requires us to have an unmarked point in column $i$ to allow this. This point was unmarked, and is now marked. Therefore, when the blue point has reached column $\frac{3n}2$ (the middle column), there are at least $\frac{3n}2$ marked points, all of which were in the blue triangle in Figure~\ref{fig:far pattern} at some point during the journey of the blue point.
    
    The same reasoning on the orange and green point gives that at some point of the process, between the first moment the blue point moves, until the point where we are ready to move the blue point of the next level, $\frac{3n}2$ points were in the orange triangle and the same amount in the green one. As the pattern $P_n$ only has $3n$ points, at least $\frac{3n}2$ need to be moved from one subtriangle to another, therefore there is a pair of triangles that will share at least $\frac{n}2$ points. Those at least $\frac{n}2$ points will need to be moved by a mean distance of at least $\frac{n}4$, which requires at least $\frac{n^2}8$ steps. Thus pattern $P_n$ requires $\Omega(n^3)$ moves to be transformed into a line.
\qed\end{proof}

\section{Size of the Line Orbit}

One can build an element of the orbit of the line by choosing a corner, then choosing a point on each line parallel to the edge opposed to the corner. This gives $3n! -3$ elements of the orbit as the only patterns that can be created this way from two different corners are the lines.

If $P$ is in the line orbit, then the number of points in $P$ in the first $k$ columns is at most $k$, and \cite{OEISupperbound} 
gives the count 
$c\left(\frac{e}2\right)^n(n-1)^{n-\frac52}$ with $c = \frac{4 + 2\text{LambertW}(-2e^{-2})}{e^3\sqrt{2\pi}}$ for patterns with this property.

Combining these, we get the following bounds on the size of the line orbit.

\begin{theorem}
\label{thm:orbit size}
    There are constants $c_1$ and $c_2$ such that $c_1e^{-n}n^{n + \frac12} \leqslant |\gamma(L_n)| \leqslant c_2 \left(\frac{e}2\right)^n(n-1)^{n-\frac52}$.
\end{theorem}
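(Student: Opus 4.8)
The statement is a corollary of the two constructions laid out immediately before it, so the plan is to extract one inequality from each and then finish with routine asymptotic bookkeeping via Stirling's formula. I would treat the corner construction as supplying the lower bound and the column inequality together with the enumeration of \cite{OEISupperbound} as supplying the upper bound, and then massage each count into the stated closed form.

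For the lower bound I would start from the corner construction, which exhibits $3n! - 3$ patterns in $\gamma(L_n)$: three choices of corner, $n!$ choices of one representative on each line parallel to the opposite edge, minus the three lines that get counted under two corners. Granting that these patterns are genuinely distinct, this gives $|\gamma(L_n)| \geq 3n! - 3$. I would then invoke the elementary Stirling lower bound $n! \geq \sqrt{2\pi}\, n^{n+1/2} e^{-n}$, valid for every $n \geq 1$; since $3n! - 3 \geq n!$ for $n \geq 2$, this already yields $|\gamma(L_n)| \geq \sqrt{2\pi}\, e^{-n} n^{n+1/2}$ for $n \geq 2$, and a single adjustment of the constant to cover $n = 1$ produces a uniform $c_1$.

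For the upper bound I would use the column inequality recorded just above the statement: every $P \in \gamma(L_n)$ is an $n$-element subset of $T_n$ whose first $k$ columns contain at most $k$ of its points, for all $k$. This itself follows from Lemma~\ref{lem:monotony excess}, since restricting a no-excess pattern to its first $k$ columns again has no excess, and because filling a set contained in those columns never leaves them, the resulting filling splits into non-touching triangles occupying disjoint column ranges, hence at most $k$ points. Consequently $\gamma(L_n)$ embeds into the family of all $n$-element subsets of $T_n$ in which the first $k$ columns carry at most $k$ points for every $k$, whose cardinality is $c(e/2)^n (n-1)^{n-5/2}(1+o(1))$ by \cite{OEISupperbound}. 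Choosing $c_2$ slightly above $c$ absorbs the $o(1)$ for all large $n$, and enlarging it to cover the finitely many small cases gives $|\gamma(L_n)| \leq c_2 (e/2)^n (n-1)^{n-5/2}$.

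The Stirling and asymptotic manipulations are routine; the genuine content sits in the two imported ingredients. The step I expect to be the main obstacle is the upper bound, because it rests on the external enumeration in \cite{OEISupperbound} of the column-bounded patterns: I would want either to cite that asymptotic carefully or to re-derive an upper estimate for the sum $\sum \prod_{j} \binom{j+1}{a_j}$ over column-count vectors $(a_j)$ with $\sum_j a_j = n$ and $\sum_{j<k} a_j \leq k$, and then confirm that its leading behaviour is compatible with converting the $(1+o(1))$ into a clean uniform constant $c_2$. On the lower-bound side the only point I would verify explicitly is the distinctness of the $3n!-3$ family, in particular that lines are the sole patterns produced by more than one corner.
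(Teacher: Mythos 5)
Your proposal matches the paper's argument exactly: the paper likewise obtains the lower bound from the $3n!-3$ corner construction together with Stirling's formula, and the upper bound by injecting $\gamma(L_n)$ into the column-bounded $n$-subsets of $T_n$ counted asymptotically in \cite{OEISupperbound}. The points you flag for verification (distinctness of the corner family, and the justification of the column inequality) are asserted without further detail in the paper, so your treatment is if anything slightly more careful.
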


\begin{conjecture}
    There is a constant $\frac2e \leqslant c \leqslant e$ such that $|\gamma(L_n)| = \Theta \left(\left(\frac{n}c\right)^n\right)$.
\end{conjecture}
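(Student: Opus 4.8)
The plan is to establish the two bounds by independent combinatorial arguments and then convert the resulting exact counts into the stated asymptotics by Stirling's formula. For the lower bound I would use the explicit ``corner transversal'' construction: fixing one of the three corners of $T_n$ and selecting a single point on each of the $n$ lattice lines parallel to the opposite edge produces a pattern $P$ with $|P|=n$. I would first check that each such $P$ lies in $\gamma(L_n)$, which by Theorem~\ref{thm:line_orbit} amounts to showing that $e(P)=0$ and $\varphi(P)=T_n$; the point is that a transversal of the parallel lines forces the filling to recover the whole triangle without creating excess. Counting these patterns over all three corners, and using the observation (recorded just before the statement) that two distinct corners produce a common pattern only for the three edges of $T_n$, gives exactly $3\,n!-3$ distinct elements of the orbit. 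Stirling's formula $n!=\sqrt{2\pi}\,e^{-n}n^{\,n+\frac12}(1+o(1))$ then yields a constant $c_1$ with $3\,n!-3\geq c_1 e^{-n}n^{\,n+\frac12}$ for all $n$.

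For the upper bound the central step is the ballot-type property: every $P\in\gamma(L_n)$ has at most $k$ points in its first $k$ columns, for every $k$. I would prove this by setting $P'=P\cap C_k$, where $C_k$ denotes the first $k$ columns. Since $P'\subseteq P$ and $P$ is a fill matrix, Lemma~\ref{lem:monotony excess} gives $e(P')\leq e(P)=0$, so $P'$ has no excess and, by Lemma~\ref{lem:fill_shape}, $\varphi(P')=\bigcup_i(v_i+T_{m_i})$ with $|P'|=\sum_i m_i$. The geometric ingredient is that the fill of a set living in $C_k$ never leaves $C_k$: in any triangle move two of whose cells lie in $C_k$, the third necessarily lies in $C_k$ as well, so by induction on the fill process $\varphi(P')\subseteq T_n\cap C_k$. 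From the way the non-touching triangles $v_i+T_{m_i}$ are forced to sit inside the single triangle $T_n$ I would then deduce $\sum_i m_i\leq k$. With the ballot property in hand, $\gamma(L_n)$ embeds into the family of size-$n$ subsets of $T_n$ satisfying it, whose cardinality is the quantity $c\,(e/2)^n(n-1)^{\,n-\frac52}$ supplied by \cite{OEISupperbound}; absorbing $c$ into a constant $c_2$ gives the upper bound.

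The main obstacle I expect is the inequality $\sum_i m_i\leq k$ underlying the ballot property. A naive column-projection argument fails, because two non-touching triangles in the decomposition of $\varphi(P')$ can occupy overlapping column ranges at different heights inside $T_n$, so disjointness of their column-supports cannot be assumed outright. I would resolve this by exploiting the structure of $T_n$ — each column $a$ is a single contiguous vertical segment of $a+1$ cells bounded below by the hypotenuse $x+y=n-1$, so that every $v_i+T_{m_i}$ is pinned toward the top–right of the columns it meets — and then combine the non-touching condition of Lemma~\ref{lem:fill_shape} with $\varphi(P')\subseteq T_n\cap C_k$ to force the total width $\sum_i m_i$ down to at most $k$. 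Once this lemma is secured, the remaining work — verifying that the corner patterns fill, checking their distinctness, and extracting the constants $c_1,c_2$ from the two asymptotic expressions — is routine bookkeeping.
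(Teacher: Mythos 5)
What you have written is, in essence, the paper's argument for Theorem~\ref{thm:orbit size} (the corner-transversal count of $3n!-3$ for the lower bound, the ballot-type column property plus the count from \cite{OEISupperbound} for the upper bound, and Stirling to normalise). But the statement you were asked to prove is the \emph{conjecture}, and your argument does not — and cannot — establish it. The conjecture asserts the existence of a \emph{single} constant $c$ with $|\gamma(L_n)| = \Theta\left(\left(\frac{n}{c}\right)^n\right)$, i.e.\ matching upper and lower bounds up to bounded multiplicative factors. Your two bounds have different exponential bases: the lower bound is of order $\left(\frac{n}{e}\right)^n$ and the upper bound of order $\left(\frac{ne}{2}\right)^n$ (up to polynomial factors), so their ratio grows like $\left(\frac{e^2}{2}\right)^n$. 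Such a pair of bounds only shows that \emph{if} a constant $c$ as in the conjecture exists, it must lie in $\left[\frac{2}{e}, e\right]$; it gives no reason for such a $c$ to exist at all. The paper accordingly states this as an open conjecture, with no proof; proving it would require something genuinely new, e.g.\ a super-/submultiplicativity argument showing that $|\gamma(L_n)|^{1/n}/n$ converges, together with control of the subexponential factors to upgrade convergence of the $n$-th root to a true $\Theta$ estimate. Nothing in your proposal addresses this.

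As a secondary point: if your goal were Theorem~\ref{thm:orbit size} rather than the conjecture, your outline follows the paper's sketch closely and the one step you flag as delicate — that a pattern in the line orbit has at most $k$ points in its first $k$ columns — is indeed the part needing care; your plan via $e(P\cap C_k)\leq 0$, closure of the fill process under the column restriction, and the triangle decomposition of Lemma~\ref{lem:fill_shape} is a reasonable way to carry it out.
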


\section{Bases}

Let $X_R$ be a TEP subshift for some $R \subset \Sigma^T$ and fix some $n \in \N$. Recall from \cite{Sa22} that the pattern $q \in \Sigma^{T_n}$ appears in $X_R$ if and only if it does not explicitly contain a translate of a pattern in $\Sigma^T \setminus R$.

Let $P \subset T_n$. We say $P$ is a \emph{basis} (for the restriction of $X_R$ to $T_n$) if for every $q \in \Sigma^P$, by iteratively completing partially filled triangles using the TEP rule $R$, we always end up with a valid pattern on $T_n$.

\begin{theorem}
\label{thm:Bases}
The following are equivalent for $P \subset T_n$:
\begin{enumerate}
    \item $P$ is a basis,
    \item $P$ is a fill matrix,
    \item $P$ is in the line orbit.
\end{enumerate}
\end{theorem}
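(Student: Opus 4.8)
The plan is to treat (2) $\Leftrightarrow$ (3) as already settled and to link both to (1) by a counting argument, exploiting the fact that the positional filling $\varphi$ governs the symbolic completion performed in the basis definition. I work under the standing assumption $|\Sigma|\geq 2$, which is necessary: for $|\Sigma|=1$ every $P$ with $\varphi(P)=T_n$ is a basis, including over-determined ones, so the statement fails. For $P\subseteq T_n$ the three conditions are read relative to the ambient triangle; in particular a basis must cover all of $T_n$, so I may assume $\varphi(P)=T_n$, since otherwise the completion never reaches $T_n\setminus\varphi(P)$ and $P$ is neither a basis of $T_n$, nor a fill matrix of $T_n$, nor an element of $\gamma(L_n)$, making all three conditions fail together. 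Under $\varphi(P)=T_n$, both (2) and (3) reduce to $|P|=n$ (no excess): this is exactly Theorem~\ref{thm:excess_orbit} with $k=0$ (together with Theorem~\ref{thm:line_orbit}), since $P_{n,0}=L_n$. It therefore remains to show that, assuming $\varphi(P)=T_n$, condition (1) is equivalent to $|P|=n$.

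The key tool is a \emph{filling lemma}: if $\varphi(P)=T_n$ and $q\in\Sigma^{P}$ admits some valid extension $\bar q\in\Sigma^{T_n}$ (with $\bar q|_P=q$ and no forbidden triangle), then the symbolic completion of $q$ is forced, never produces a contradiction, and reconstructs exactly $\bar q$. I would prove this by induction along the completion, in any order: each newly determined cell is the third vertex of a triangle whose other two cells already carry the values of $\bar q$, and since $\bar q$ is valid the unique TEP-completion of those two values is $\bar q$'s value at the new cell. Hence the completion agrees with $\bar q$ at every step, is order-independent by confluence, terminates on $\varphi(P)=T_n$, and equals $\bar q$. Two consequences follow at once: a valid extension of $q$ is unique when it exists, and the restriction map $\rho$ from the valid patterns on $T_n$ to $\Sigma^{P}$, $\bar q\mapsto\bar q|_P$, is injective.

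Next I would fix the number of valid patterns on $T_n$. Taking $P$ to be an edge $L$ of $T_n$ (so $|L|=n$ and $\varphi(L)=T_n$), completing an edge assignment simply runs the bipermutive CA row by row and never fails, so $L$ is a basis; combined with the filling lemma, $q\mapsto(\text{completion of }q)$ is then a bijection between $\Sigma^{L}$ and the valid patterns on $T_n$, which are therefore exactly $|\Sigma|^{n}$ in number. The counting argument now closes the proof. With $\varphi(P)=T_n$, $P$ is a basis if and only if every $q\in\Sigma^{P}$ has a valid extension, i.e. iff $\rho$ is surjective; since $\rho$ is injective between finite sets of sizes $|\Sigma|^{n}$ and $|\Sigma|^{|P|}$, surjectivity is equivalent to $|\Sigma|^{|P|}=|\Sigma|^{n}$, hence (as $|\Sigma|\geq 2$) to $|P|=n$. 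This is precisely condition (2)/(3) under the assumption $\varphi(P)=T_n$, completing the cycle of equivalences.

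The hard part is the direction (1)$\Rightarrow$(2), and it is worth flagging why the naive route fails: one might hope that any pattern with excess contains a literally redundant point that could be assigned inconsistently to break the basis property, but Figure~\ref{fig:no excess point} exhibits a pattern of excess $1$ whose only excess set is empty, so no single point can be deleted. The counting argument sidesteps this entirely, detecting excess globally through the mismatch between $|\Sigma|^{|P|}$ and the fixed count $|\Sigma|^{n}$ rather than through a cell-by-cell bad assignment. The only genuinely technical ingredients are the filling lemma, which is pure uniqueness-and-confluence bookkeeping once the bipermutive completion is identified with $\varphi$, and the count $|\Sigma|^{n}$, which is immediate from the edge case.
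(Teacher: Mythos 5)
Your proof is correct, and while your forward direction (basis $\Rightarrow$ fill matrix) matches the paper's in spirit --- both deduce $\varphi(P)=T_n$ from the definition of the completion and then pin down $|P|=n$ by counting valid patterns on $T_n$ against $\Sigma^P$ (the paper gets the lower bound $|P|\geq n$ from Lemma~\ref{lem:fill_shape}, you get both bounds from injectivity of the restriction map; the paper's phrase ``the total number of valid patterns on $\Sigma^{T_n}$ is $n$'' is evidently a typo for $|\Sigma|^n$) --- your converse direction is genuinely different. The paper proves (3)$\Rightarrow$(1) dynamically: each solitaire move induces a bijection $\Sigma^{P_i}\to\Sigma^{P_{i+1}}$ preserving the unique completion, so the basis property propagates along a solitaire path from the edge (which is clearly a basis) back to $P$. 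You instead prove (2)$\Rightarrow$(1) statically: your filling lemma makes the restriction map $\rho$ from valid patterns on $T_n$ to $\Sigma^P$ injective, the edge case fixes the number of valid patterns at $|\Sigma|^n$, and $|P|=n$ then forces $\rho$ to be a bijection, so every $q\in\Sigma^P$ has a (necessarily unique) valid extension and the completion never gets stuck or produces a forbidden triangle. Your route avoids the solitaire dynamics entirely in this direction and packages both implications into a single cardinality equivalence; the paper's route has the side benefit that the move-by-move bijection it constructs is exactly what is reused in the proof of Theorem~\ref{thm:Memoryless}. Your explicit caveat that $|\Sigma|\geq 2$ is required (and is implicitly assumed by the paper's counting step as well) is a fair and worthwhile observation.
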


Note that the first item talks about a specific (but arbitrary) TEP subshift, while the rest do not, i.e.\ the set of bases is independent of $\Sigma$ and $R$.

The solitaire process allows us to translate patterns on one basis to ones on another, more space efficiently than the direct method suggests, indeed we give a polynomial time in-place algorithm for this.

If $P, Q \subset T_n$ are bases, any pattern $p \in \Sigma^P$ uniquely determines a pattern in $q \in \Sigma^Q$ in an obvious way, by deducing the unique extension of one pattern to $T_n$ and then restricting to the domain of the other. If we biject $P$ and $Q$ with $\{1, \ldots, n\}$ we obtain a bijection $\phi : \Sigma^n \to \Sigma^n$. A \emph{basic permutation} of $\Sigma^n$ is one that ignores all but two cells. If the cells are $1, 2$, this means that for some $\hat\pi \in \mathrm{Sym}(\Sigma^2)$ we have
$\pi(a_1 a_2 a_3 \cdots a_n)_i = \hat\pi(a_1 a_2) a_3 \cdots a_n$
for all $a_1a_2\cdots a_n \in \Sigma^n$; in general one conjugates by a reordering the cells.

\begin{theorem}
\label{thm:Memoryless}
The bijection $\phi$ can be computed with $O(n^3)$ basic permutations.
\end{theorem}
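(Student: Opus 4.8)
The plan is to connect $P$ and $Q$ by a sequence of elementary basis changes, each of which lifts to a single basic permutation, and to bound the length of this sequence by $O(n^3)$. By Theorem~\ref{thm:Bases} both $P$ and $Q$ lie in the line orbit, so it suffices to transform an arbitrary basis into one fixed canonical basis (say the right edge of $T_n$) and to compose the two resulting sequences of permutations; the direction $Q\to\text{canonical}$ costs the same read backwards, since basic permutations are invertible.

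The elementary move I use is not the solitaire move itself but what I will call a \emph{slide}: given basis positions $a,b$ and a position $c$ with $\{a,b,c\}$ a translate of $T$, with $a,b$ in the current basis and $c$ outside it, I move the point at $b$ to $c$. On the level of the stored data this replaces the value $w_b$ by the completion value $w_c$ at $c$ and leaves every other stored value untouched. Since $R$ is TEP, for each fixed $w_a$ the map sending $w_b$ to the value completing $(a\mapsto w_a,\,b\mapsto w_b)$ at $c$ is a bijection of $\Sigma$; hence $(w_a,w_b)\mapsto(w_a,w_c)$ is a bijection of $\Sigma^2$, i.e.\ the slide is exactly a \emph{basic permutation} on the two cells indexed by $a$ and $b$. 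This is the one place where bipermutivity, rather than mere determinism of the completion, is essential: a genuine solitaire move $a\mapsto b$ with $c$ outside the basis generally alters $w_b$ as a function of many stored cells and is \emph{not} a basic permutation, which is why slides are the correct primitive. A short argument keeps us inside the orbit: writing $P'=P\setminus\{b\}\cup\{c\}$, from $a,b\in P$ we get $c\in\varphi(P)$ and from $a,c\in P'$ we get $b\in\varphi(P')$, so $P\subseteq\varphi(P')$ and $P'\subseteq\varphi(P)$, whence $\varphi(P')=\varphi(P)$; as $|P'|=|P|$ this preserves the triangle decomposition of Lemma~\ref{lem:fill_shape} and hence the excess, so $P'$ is again a fill matrix by Theorem~\ref{thm:line_orbit}.

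To canonicalise a basis in $O(n^3)$ slides I would peel $T_n$ one layer at a time. Using $O(n^2)$ slides I would first bring the intersection of the basis with the outer edge into its canonical configuration and separate the remaining $n-1$ points so that they form a basis of the sub-triangle $T_{n-1}$; recursing on $T_{n-1}$ then yields $\sum_{m=1}^{n}O(m^2)=O(n^3)$. Each layer costs $O(n^2)$ because it moves $O(n)$ points over distance $O(n)$, and one slide advances one point by a single lattice step; the $\varphi$/excess invariance just established guarantees that the peeled-off interior is genuinely a basis of $T_{n-1}$, so the induction is well founded. Composing the canonicalising sequence for $P$ with the reverse of that for $Q$ then expresses $\phi$ as a product of $O(n^3)$ basic permutations (the bookkeeping bijection between the $n$ storage cells and the current basis positions is updated for free as the points slide).

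The main obstacle is the peeling step: I must show that the required slides are always available, i.e.\ that whenever a point has to be advanced there is a neighbouring basis point to serve as the pivot $a$, and that after clearing the outer edge the interior points really do fill $T_{n-1}$ with no excess. Establishing pivot availability seems to require the non-touching triangle decomposition of Lemma~\ref{lem:fill_shape} together with a careful choice of the order in which points are moved, mirroring the line-forming and excess-pushing arguments in the proofs of Theorems~\ref{thm:line_orbit} and~\ref{thm:excess_orbit} but realised by slides rather than by solitaire moves. Finally, the exponent is tight: since each basic permutation displaces one point by a bounded distance, the $\Omega(n^3)$ point-travel lower bound underlying Theorem~\ref{thm:diameter} applies verbatim to slides and shows that $\Omega(n^3)$ basic permutations are necessary in the worst case.
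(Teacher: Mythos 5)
Your central reduction is exactly the paper's: each elementary move of the basis lifts, via bipermutivity of the TEP family, to a single basic permutation on the two affected storage cells, so an $O(n^3)$-length sequence of moves yields the bound. One correction of perspective first: your ``slide'' is not a new primitive --- it \emph{is} the triangle solitaire move. A solitaire move is only enabled when exactly two of the three cells of a translate of $T$ are occupied, and any two distinct two-element subpatterns of the triangle share exactly one cell, so every nontrivial move fixes one occupied cell and transports the other to the previously empty one. The distinction you draw between slides and ``genuine solitaire moves'' is therefore vacuous, and your verification that a slide preserves the filling (hence fill-matrix-hood) is just Lemma~\ref{lem:fill_invar}.

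The genuine gap is the peeling construction, which you yourself flag as unestablished: you never show that the pivots needed to clear the outer edge are always available, nor that the remaining $n-1$ points form a basis of $T_{n-1}$. But this detour is unnecessary. Since $P$ and $Q$ are bases, Theorem~\ref{thm:Bases} places both in the line orbit, and Theorem~\ref{thm:diameter} (equivalently, the $O(n^2(n+k))$ analysis of Algorithm~\ref{alg:path} with excess $k=0$) already provides a solitaire path of length $O(n^3)$ from $P$ to $Q$ through the line; this is precisely what the paper's proof invokes, and substituting that citation for your peeling argument closes the gap. One smaller omission: after following the path, the stored values occupy the $n$ storage cells in an order determined by the history of the moves rather than by the fixed bijection of $Q$ with $\{1,\ldots,n\}$; your remark that the bookkeeping is ``updated for free'' tracks this correspondence but does not physically realise it, so you still need $O(n)$ transposition-type basic permutations at the end. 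Neither issue affects the $O(n^3)$ bound, and your closing observation that the $\Omega(n^3)$ lower bound transfers is a correct bonus not required by the statement.
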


\section{Prospects for future work}

Arguments similar to those of this paper work for several other shapes, but for now we have not found a general result, in particular Question~5.36 from \cite{Sa22} stays open.

While Theorem~\ref{thm:Bases} characterises the patterns that generate the contents of a triangle, it does not characterise the maximal (or maximum cardinality) independent sets, i.e.\ patterns $P \subset T_n$ such that $X|_P = \Sigma^P$, where $X$ is a TEP subshift. The characterization of such sets seems more difficult and does depend on the specific TEP subshift.

We introduced in Section~\ref{sec:excess sets} the notion of an excess set. What can be said about the family of excess sets as a set system? How can we determine the maximum cardinality of an excess set?


%
%
%
 \bibliographystyle{splncs04}
 \bibliography{biblio} 

\newpage
\appendix

\section*{Omitted proofs}

\begin{proof}[Proof of Lemma~\ref{lem:monotony excess}]
    Denote $Q = \{x_1, \ldots x_m\}$ and $P = \{x_1, \ldots, x_n\}$. Then build the fillings of the $P_j = \{x_1, \ldots x_j\}$. With the proof of the previous lemma and using the same notations, we obtain that for each $1 \leqslant j \leqslant n$, either $x_j \in \varphi(P_{j-1})$ and $e(P_{j}) > e(P_{j-1})$ or adding $x_j$ augments $\sum_{i=1}^{r_{j-1}} k_{i, j-1}$ by one and $e(P_{j}) = e(P_{j-1})$ if no merges are triggered. Merges may only increase the amount of excess, as they can only decrease the sum of triangle sizes. Thus $e$ is monotonous.
\qed\end{proof}

\begin{proof}[Proof of Lemma~\ref{lem:fill_invar}]
    There is a sequence of transformation $t_1, \ldots, t_k$ that transforms $P$ into $Q$. Starting with $P$, we can fill the triangles at position $t_1, \ldots, t_k$, thus adding all the points of $Q$ to $P$, so $\varphi(Q) \subset \varphi(P)$. The other inclusion is symmetric.
\qed\end{proof}

\begin{proof}[Proof of Theorem~\ref{thm:Bases}]
The equivalence of the last two items is a special case of Theorem~\ref{thm:line_orbit}. We show that (1) implies (2) and (3) implies (1). 

Suppose thus first that $P$ is a basis. Then by definition $\varphi(P) = T_n$, as the iterative steps of completing triangles correspond exactly to the basic steps of the filling process. We cannot have more than $n$ elements in $P$, as the total number of valid patterns on $\Sigma^{T_n}$ is $n$. By Lemma~~\ref{lem:fill_shape} we also cannot have less, or the set of eventually filled positions could not possibly cover $T_n$. We conclude that $P$ is a fill matrix.

Suppose next that $P$ is in the line orbit. Let $P_0 = P, P_1, P_2, \ldots, P_k$ be a sequence of successive steps in the solitaire such that $P_k$ is an edge of $T_n$. It is easy to see that the property of being a basis is preserved under solitaire moves, indeed each solitaire step induces a natural bijection $\phi : \Sigma^{P_i} \to \Sigma^{P_{i+1}}$ and the unique completions of $p$ and $\phi(p)$ are the same. Since the line is clearly a basis, $P$ must be a basis as well.
\qed\end{proof}

\begin{proof}[Proof of Theorem~\ref{thm:Memoryless}]
Let $P_0 = P, P_1, \ldots, P_k = Q$ be the sequence of solitaire moves, which we know can be taken of length $O(n^3)$. Keep track of an ordering of the cells in $P_i$ so that if a solitaire move touches cells $P_i$ with indices $j_1, j_2$, then only the vectors with indices $j_1, j_2$ differ between $P_i$ and $P_{i+1}$. Then at each solitaire step, the TEP family allows us to compute the unique pattern in $\Sigma^{P_{i+1}}$ compatible with $\Sigma^{P_i}$ with a single basic permutation. After $k$ steps we obtain the pattern $\Sigma^Q$, but in a random order, and we can fix the ordering by applying $O(n)$ basic permutations with $\hat\pi$ of the form $ab \leftrightarrow ba$.
\qed\end{proof}

\newpage
\section*{Omitted Figures}

\begin{figure}[ht]
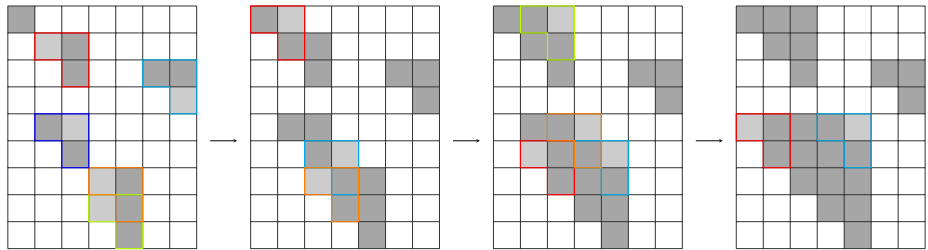

    \centering
    \includestandalone[width =\textwidth]{Figures/filling}
    \caption{An example of filling process.}
    \label{fig:fill process}
\end{figure}

\begin{figure}[ht]
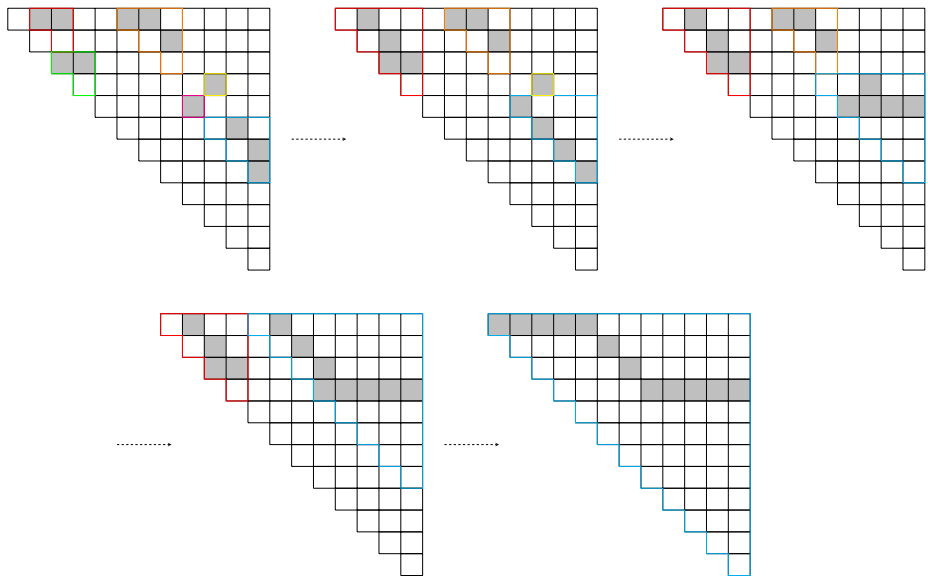

    \centering
    \includestandalone[width =\textwidth]{Figures/return}
    \caption{An example of how to get back to the line from a random element of its orbit.}
    \label{fig:return}
\end{figure}

\end{document}